 \DeclareMathOperator{\tr}{tr}
\DeclareMathOperator{\Img}{Im}
\DeclareMathOperator{\Hom}{Hom}
\DeclareMathOperator{\U}{U}
\newcommand{\CP}{{\mathbb C}P}
\newcommand{\C}{{\mathbb C}}
\newtheorem{theorem}{Theorem}[section]
\newtheorem{corollary}[theorem]{Corollary}
\newtheorem{lemma}[theorem]{Lemma}
\newtheorem{definition}[theorem]{Definition}
\newtheorem{proposition/definition}[theorem]{Proposition/Definition}
\theoremstyle{definition}
\newtheorem{eg}[theorem]{Example}
\newtheorem{remark}[theorem]{Remark}
\newtheorem{example}[theorem]{Example}
\title[Primitive immersions of constant curvature]
{Primitive  immersions of constant curvature of surfaces into  flag manifolds}
\author[R. Pacheco]{Rui Pacheco}
\address{Centro de Matem\'{a}tica e Aplica\c{c}{\~{o}}es (CMA-UBI), Universidade da Beira Interior, 6201 -- 001
	Covilh{\~{a}}, Portugal.}
\email{rpacheco@ubi.pt, mehmood.ur.rehman@ubi.pt}
\author[M. U. Rehman]{Mehmood Ur Rehman}
\thanks{The first author was partially supported by Funda\c{c}\~{a}o para a Ci\^{e}ncia e Tecnologia through the project UID/MAT/00212/2019. The second author was partially supported by Funda\c{c}\~{a}o para a Ci\^{e}ncia e Tecnologia through the grant UI/BD/153058/2022.}
\keywords{harmonic map, minimal immersion, Grassmannian manifold, Riemann surface, flag manifold, primitive map, curvature, Veronese map}
\subjclass[2010]{53C42,  53A10, 53C35, 58E20}
\begin{document}

\maketitle

\begin{abstract}
We investigate certain immersions of constant curvature from Riemann surfaces into flag manifolds equipped  with  invariant metrics, namely primitive lifts associated to pseudoholomorphic maps of surfaces into complex Grassmannians. We prove that a  primitive immersion from the two-sphere into the full flag manifold which has constant curvature with respect to \emph{at least one}  invariant metric is unitarily equivalent to the primitive lift of a Veronese map, hence it has constant curvature with respect to \emph{all} invariant metrics. 
We prove a partial generalization of this result to the case where the domain is a general simply connected Riemann surface. On the way,  we consider the problem of finding the  invariant metric on the flag manifold, under a certain normalization condition,  that maximizes the induced area of the two-sphere by a given primitive immersion.
	\end{abstract}

\section{Introduction}   E.~Calabi  in his PhD thesis considered  the  problem of finding which complex Hermitian manifolds can be holomorphically and isometrically embedded into a complex space form (see \cite{LawsonCalabi} and the references therein).  His answers to this problem led to important applications, among which we can  find the local classification of complex submanifolds with constant holomorphic sectional curvature of a complex space form.  In particular, E. Calabi showed that any such submanifold of the complex projective space $\mathbb{C}P^{n-1}$ is locally unitarily congruent to a piece of a \emph{round Veronese embedding.}  Making use of this classification result, J. Bolton et al. \cite{Bolton} proved that any conformal minimal immersion of constant curvature of the  two-sphere $S^2$ in the complex projetive space  belongs to the \emph{Veronese sequence}, up to unitary congruence. This was later generalized by Q.-S. Chi and Y. Zheng to the case where the domain is a simply connected surface, not necessarily closed, in \cite[Theorem 2]{Chi-Zheng}.

For isometric immersions, minimality  is equivalent to harmonicity  \cite{eells-sampson,urakawa}, hence the methods of  harmonic map  theory can be brought to the study of minimal immersions of constant curvature. We are particularly interested in the following two methods:

\begin{enumerate}
\item \emph{Harmonic sequences.} The study of harmonic maps from Riemann surfaces into complex Grassmanians through their harmonic sequences was initiated by J.~G.~Wolfson \cite{Wolfson} and further developed by J.~C.~Wood and his collaborators in a sequence of papers \cite{bahy-wood-G2, bahy-wood-HPn,burstall-wood,erdem-wood}.   
 More recently, the method of harmonic sequences  has been intensively used   to classify minimal immersions of constant curvature of $S^2$  into different Riemannian symmetric spaces  \cite{HeJiao2014,HeJiao,Jiao2008,JiaoLi Qn, JiaoLi Q5,JiaoPeng2003,JiaoXu,JXX, LiHeJiaoQ3,PengWangXu}.

\item \emph{Twistorial constructions.} On the other hand, there exists a well-established  theory on  twistorial constructions of harmonic maps  from Riemann surfaces into symmetric spaces  \cite{burstall,burstallrawnsley}.  An important class of twistor lifts is that of \emph{primitive maps} into $k$-symmetric spaces \cite{burstall,Guest}. 
\end{enumerate} 

A natural problem arising from the above observations is that of characterizing the  primitive immersions with  constant curvature of a Riemann surface  into  a $k$-symmetric space $G/K$ (equipped with  $G$-invariant metrics). In the present paper we address this question in the important case of  {primitive lifts} of \emph{pseudoholomorphic}  maps \cite{JiaoPeng2003} from $S^2$ into complex Grassmannians. 
Recall that, by definition,  any pseudoholomorphic map into a complex Grassmannian $G_r(\C^n)$ belongs to the harmonic sequence $\psi_0,\ldots,\psi_p$ associated to some holomorphic map $\psi_0$ (which we will  assume to be linearly  full) into a complex Grassmannian $G_{r_0}(\C^n)$, with $r_0\geq r$; the corresponding \emph{primitive  lift} $\Psi=(\psi_0,\ldots,\psi_p)$ takes values in some flag manifold of the form
$$\frac{\U(n)}{\U(r_0)\times\ldots \times \U(r_p)},$$
with $r_0+\ldots+r_p=n$, endowed with its natural structure of $k$-symmetric space, with $k=p+1$. Note that the complex Grassmannian admits only one (up to positive scalar multiplication) $\U(n)$-invariant metric, while the flag manifold (for $p>1$) admits infinitely many  nonequivalent $\U(n)$-invariant metrics. In Theorem \ref{principaltheorem} we will  prove that \emph{if any such primitive lift $\Psi=(\psi_0,\ldots,\psi_p)$ from $S^2$ has constant curvature with respect to at least one invariant metric, then it has constant curvature with respect to all invariant metrics; moreover, each $\psi_j:S^2\to G_{r_j}(\C^n)$ is an immersion of constant curvature with constant K\"{a}hler angle.} Since all harmonic maps from $S^2$ into $\CP^{n-1}$ are pseudoholomorphic \cite{eellswood}, we will conclude  (see Corollary  \ref{principalcorollary}) that \emph{any full primitive immersion from $S^2$ into the full flag manifold which has constant curvature with respect to at least one  invariant metric is unitarily equivalent to the primitive lift of a Veronese map.} In the final section, we prove a partial generalization of this result to the case where the domain is a general simply connected Riemann surface (not necessarily closed). The technique introduced by Q.-S. Chi and Y. Zheng \cite{Chi-Zheng} will play an important role in this generalization. 
On the way,  we consider the problem of finding the  invariant metric on the flag manifold, under a certain normalization condition,  that maximizes the induced area of $S^2$ by a given primitive immersion. 

\subsection*{Aknowledgements} We would like to thank to Zhenxiao Xie from Beihang University for his suggestions and interest on this topic, and to John C. Wood for sharing with us his notes on the proof of \cite[Theorem 2]{Chi-Zheng}.

\section{Preliminaries}

	\subsection{Harmonic sequences}We start by recalling from \cite{Bolton,burstall-wood,erdem-wood,JiaoPeng2003,Wolfson} the definition and properties of harmonic sequences associated to  harmonic maps from Riemann surfaces into complex Grassmannians. 
	
	We consider on $\C^{n}$ the standard Hermitian inner product  
$$\left<  v, w \right>=v_1\overline{w}_1+\ldots+v_n\overline{w}_n,$$
for $v=(v_1,\ldots,v_n),w=(w_1,\ldots,w_n)\in\C^n$. 
The Grassmannian $G_k(\C^{n})$ of all $k$-dimensional complex subspaces of $\C^{n}$ is a Hermitian symmetric $\U(n)$-space,  with stabilizers conjugate to $\U(k)\times \U(n-k)$.  Given $L\in G_k(\C^{n})$, the complex structure at the corresponding tangent space is given by \cite[\S 0.(B)]{burstall-wood}
\begin{equation}\label{eq:complexstructure}
	T_L^\C G_k(\C^n)=T_L^{1,0}G_k(\C^n)\oplus T_L^{0,1}G_k(\C^n) \cong \Hom(L,L^\perp)\oplus \Hom(L^\perp,L),
\end{equation}
while the compatible Riemann metric $h=\mathrm{Re}\, h_\C$, where $h_\C$ is the Hermitian metric, is given by
\begin{equation}\label{hmetric}
	h(\xi,\eta)=\frac12\tr (\xi \eta),
\end{equation} for $\xi,\eta\in T_L  G_k(\C^n)$.  When $k=1$, this is
the \emph{Fubini-Study metric} of constant holomorphic sectional curvature equal to 4 on $\CP^{n-1}$ (see also \cite[\S 2.]{Bolton}). 

Given a  Riemann surface $M$, we interpret  any smooth map $\psi:M \to G_k(\C^n)$  as a (complex) rank-$k$ vector subbundle, also denoted by $\psi$,  of the trivial vector  bundle $M\times \C^{n}$, with fibre at $z \in M$ given by $\psi(z)$. 


Let $\psi:M\to G_k(\C^n)$ be a smooth map.
Take a local complex coordinate $z$ on $U\subset M$.  The \emph{second fundamental forms} $A'_\psi,A''_\psi\in \Hom(\psi| _U ,\psi^\perp|_U )$  (see \cite[\S 1.]{burstall-wood}) are defined
by
$$A'_\psi(s)= \pi_{\psi^\perp} \circ \frac{\partial}{\partial z} s, \quad  A''_{\psi}(s) = \pi_{\psi^\perp} \circ \frac{\partial}{\partial \bar z} s,$$
where $s$ is a section of $\psi|_U$ and  $\pi_{\psi^\perp}$ is the orthogonal (Hermitian) projection onto $\psi^\perp$.  We clearly have $A''_{\psi}=-(A'_{\psi^\perp})^*$.
These objects can be turned into global objects by considering the vector bundle homomorphisms 
$\mathcal{A}'_\psi\in \Hom(T^{1,0}M\otimes \psi,\psi^\perp)$ and $\mathcal{A}''_\psi\in \Hom(T^{0,1}M\otimes \psi,\psi^\perp)$ given  locally by ${A}'_\psi\otimes dz$ and ${A}''_\psi\otimes d\bar z$, respectively. It is well known \cite[Lemma 1.3]{burstall-wood} that
\emph{$\psi$ is harmonic if and only if $\mathcal{A}'_{\psi}$ is holomorphic, and this holds if and only if $\mathcal{A}''_{\psi}$ is antiholomorphic} (with respect to the Koszul-Malgrange complex structures of $\psi$ and $\psi^\perp$).

Given a harmonic map $\psi:M\to G_k(\C^n)$, the holomorphicity of $\mathcal{A}'_{\psi}$  is used  (see \cite[\S 2.]{burstall-wood}) to define the   first \emph{$\partial'$-Gauss bundle} $\psi_1=\underline{\Img} \,\mathcal{A}_\psi'$ of $\psi$ as the unique vector subbundle of $\psi^\perp$ that coincides with the image of $\mathcal{A}_\psi'$ almost everywhere.  If $\psi_1=\{0\}$, then $\psi$ is antiholomorphic; otherwise $\psi_1$ is a harmonic subbundle of rank $ k'\leq k$. Similarly,   the antiholomorphicity of $\mathcal{A}''_{\psi}$  is used to define the first \emph{$\partial''$-Gauss bundle} $\psi_{-1}=\underline{\Img} \,\mathcal{A}_\psi''$ of $\psi$.  If $\psi_{-1}=\{0\}$, then $\psi$ is holomorphic; otherwise $\psi_{-1}$ is a harmonic subbundle of rank  $k''\leq k$.  Hence, proceeding recursively, we can associate to a harmonic map $\psi:M\to G_k(\C^n)$ a sequence
$  \{\psi_j\}_{j\in\mathbb Z} $
of harmonic subbundles, with $\psi=\psi_0$, which is called the associated \emph{harmonic sequence} (see \cite[\S 2. and \S 3.]{burstall-wood}).

If $\psi$ is holomorphic, then $\psi$ is  \emph{strongly isotropic} \cite[(1.7)]{erdem-wood} (i.e. it has \emph{isotropy order} $\geq r$ for all $r$); it follows from \cite[Lemma 3.1]{burstall-wood} that the harmonic sequence of $\psi$ must take the form
$$\{0\}=\psi_{-1},\psi_0,\psi_1,\dots,\psi_p,\psi_{p+1}=\{0\},\quad \mbox{with $\psi_i\perp \psi_j$ for all $i\neq j\in\{0,\ldots, p\}$}.$$
A harmonic map from a Riemann surface into a complex Grassmannian  is called \emph{pseudoholomorphic} if it belongs to the harmonic sequence of some holomorphic map.  In particular, all pseudoholomorphic maps are strongly isotropic.  

\begin{remark}
	\begin{enumerate}
	\item[a)] Using the terminology of loop group theory for harmonic maps from Riemann surfaces into symmetric spaces (first introduced by K. Uhlenbeck \cite{uhlenbeck}), all pseudoholomorphic maps are harmonic maps of  \emph{finite uniton number} \cite[Proposition 4.9]{APW1}.  General criteria for finiteness of the uniton number for harmonic maps into complex Grassmannians in terms of the corresponding harmonic sequences and diagrams have been recently developed in  \cite{APW1,PW1}. 
\item[b)] The nonexistence of nonzero holomorphic differentials on the two-sphere $S^2$  ensures that all harmonic maps  from $ S^2$ into $\CP^{n-1}$ are pseudoholomorphic (see \cite[Theorem 2.6]{ burstall-wood}   and \cite[\S 7.(C)]{eellswood}).
\item[c)] An example of a harmonic map from the two-torus $T^2$ into a complex projective space which is not pseudoholomorphic is provided  by the \emph{Clifford solution} $\psi:T^2\to \CP^3$, which is  defined in homogeneous coordinates by
$$\psi(z)= [e^{2i(z+\bar z)},ie^{-2i(z+\bar z)},e^{2(\bar z- z)},e^{2( z- \bar z)}].$$ 
	\end{enumerate}
\end{remark}

 Let $\psi_0:M\to G_k(\C^n)$ be a holomorphic map and $\psi_0,\psi_1,\dots,\psi_p$ be its harmonic sequence.  
  In view of \eqref{eq:complexstructure}, we have  (see \cite[\S 1.C]{burstall-wood})
 $$d\psi_j\big(\tfrac{\partial}{\partial z}\big)^{1,0}=A'_{\psi_j},\quad d\psi_j\big(\tfrac{\partial}{\partial z}\big)^{0,1}=-A'_{\psi_{j-1}},$$
  for each $j\in\{0,\ldots, p\}$. Set 
 \begin{equation}\label{gammaj}
 \gamma_j=\tr  A'_{\psi_j}(A'_{\psi_j})^*.
 \end{equation} 
 Each $\psi_j$ is  a branched  conformal immersion and the  metric induced from \eqref{hmetric} by  $\psi_j$ on $M$  is  locally given by 
 $\psi_j^*h=(\gamma_{j-1}+\gamma_j) dzd\bar z;$
  the corresponding curvature is given by \cite[\S 3.]{JiaoPeng2003}
\begin{equation}\label{curvaturepsi}
K(\psi_j)=-\frac{2}{\gamma_{j-1}+\gamma_j} \frac{\partial^2}{\partial z\partial \bar z}\log (\gamma_{j-1}+\gamma_j).
\end{equation}
Denoting by $\theta_j$ the \emph{K\"ahler angle} of $\psi_j$, we have \cite[\S 3.]{JiaoPeng2003}
\begin{equation}\label{kahler}
(\tan (\theta_j/2))^2=\frac{\gamma_{j-1}}{\gamma_j}.
\end{equation}

\begin{remark}\label{pluckerembedding}
	Consider the \emph{Pl\"{u}cker embedding}  $\iota:G_{k}(\C^n) \hookrightarrow \CP^{{n \choose k}-1}.$
	This is a holomorphic isometry. Hence, if 
	$\psi:M \to G_{k}(\C^n)$ is a holomorphic map, and $s$ is a local nonvanishing section of $\iota\circ \psi$,   then the metric   on $M$ induced by $\psi$  from $h=ds^2_{ G_{k}(\C^n)}$  is locally  given by (see \cite{Bolton, Chi-Zheng,Jiao2008,JiaoPeng2003})
	\begin{equation}\label{metricgrasshol}
		\psi^*ds^2_{ G_{k}(\C^n)}=  \frac{\partial^2}{\partial z\partial \bar z}\log \|s\|^2dzd\bar z.
	\end{equation}
\end{remark}

\subsection{The Veronese sequence}
Recall that the $n$-\emph{Veronese map} $V^n:S^2\to \C P^n$ is the holomorphic map given by
$$V^n(z) =\left[1, \sqrt{n \choose 1}  z, \ldots, \sqrt{n \choose r} z^r, \ldots, z^n\right],
	$$in terms of the complex coordinate $z$ on $\C\subset S^2.$
	This is a linearly full hollomorphic immersion of constant curvature. Moreover, up to unitary congruence, the $n$-Veronese map is the unique such immersion, as shown by E. Calabi (see \cite{LawsonCalabi} and the references therein).  The harmonic sequence $V_0^n, \ldots, V_n^n$  associated to $V^n=V^n_0$ is called the \emph{$n$-Veronese sequence}.
	\begin{theorem}\label{veronesebolton} \cite{Bolton}
		The Veronese sequence $V_0^n,\ldots, V^n_n :S^2 \rightarrow \mathbb{C}P^n$ is given by
		$$V_j^n= \left[f_{j,0},\ldots, f_{j,n} \right]$$where,
		$$f_{j,r}(z)= \frac{j!}{(1+z\bar{z})^j}  \sqrt{n \choose r}  z^{r-j} \sum_{k} (-1)^k   {r \choose j-k}  {n-r \choose k} 
		(z\bar{z})^k.$$
		Moreover, $V_j^n$ is a minimal immersion with induced metric 
		$${V_j^n}^*h=(\gamma_{j-1}+\gamma_j)dzd\bar z=\frac{n+2j(n-j)}{(1+z\bar{z})^2}dzd\bar{z},\quad \gamma_j=\frac{(j+1)(n-j)}{(1+z\bar z)^2}$$ and having  constant curvature $K(V_j^n)= \frac{4}{n+2j(n-j)}$.
	\end{theorem}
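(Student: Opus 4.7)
The plan is to prove the theorem by induction on $j$, starting from $V_0^n=V^n$. The base case is immediate: in the defining sum for $f_{0,r}$ only the term $k=0$ contributes (since $\binom{r}{-k}=0$ for $k\geq 1$), yielding $f_{0,r}(z)=\sqrt{\binom{n}{r}}z^r$, which is the Veronese map $V^n$. For the inductive step I would use the facts, recalled in the preliminaries, that the harmonic sequence of a holomorphic map into a complex Grassmannian consists of mutually orthogonal subbundles, and that $V_{j+1}^n=\underline{\Img}\,\mathcal{A}'_{V_j^n}$. Since each $V_j^n$ is a \emph{line} subbundle of the trivial bundle $S^2\times\C^{n+1}$, a local nonvanishing section $f_{j+1}$ spanning $V_{j+1}^n$ is obtained from any local nonvanishing section $f_j$ of $V_j^n$ by applying $\partial/\partial z$ and performing Gram--Schmidt against $f_j$:
\begin{equation*}
f_{j+1}=\frac{\partial f_j}{\partial z}-\frac{\langle\partial_z f_j,\,f_j\rangle}{\|f_j\|^2}\,f_j,
\end{equation*}
the right-hand side being automatically a section of $V_{j+1}^n$ by definition of the $\partial'$-Gauss bundle.

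Within a single induction I would establish two claims simultaneously: (i) the closed form of $f_j$ given in the theorem, and (ii) the norm identity $\|f_j\|^2=\tfrac{j!\,n!}{(n-j)!}(1+|z|^2)^{n-2j}$. Given (i), the product $\langle\partial_z f_j,f_j\rangle$ is computed directly from the explicit formula (one must take care here: for $j\geq 1$ the section $f_j$ is not holomorphic, so $\langle f_j,\partial_{\bar z}f_j\rangle\neq 0$ and this product cannot be replaced by $\partial_z\|f_j\|^2$). Substituting into the recursion yields an explicit expression for $f_{j+1}$, which matches the claimed closed form at level $j+1$ via a term-by-term manipulation using Pascal's rule applied to $\binom{r}{j-k}$ and $\binom{n-r}{k}$, after regrouping by powers of $|z|^2$. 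The norm at level $j+1$ then follows from a binomial computation, equivalently a Chu--Vandermonde convolution. With (i) and (ii) in hand one reads off $\gamma_j=\|f_{j+1}\|^2/\|f_j\|^2=(j+1)(n-j)/(1+|z|^2)^2$, so $\gamma_{j-1}+\gamma_j=(n+2j(n-j))/(1+|z|^2)^2$ is the claimed induced metric density. The curvature formula \eqref{curvaturepsi} then delivers $K(V_j^n)=4/(n+2j(n-j))$ using the elementary identity $\partial_z\partial_{\bar z}\log(1+|z|^2)^{-2}=-2/(1+|z|^2)^2$.

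The main technical obstacle is the combinatorial identity required to recognise the output of the recursion as the stated closed form for $f_{j+1}$. A conceptual short-cut which proves constancy of the curvature without identifying the numerical constants is to observe that $V_0^n$ is $\mathrm{SU}(2)$-equivariant with respect to the $n$-th symmetric power representation on $\C^{n+1}$; since the Gauss bundle construction is built from $\U(n+1)$-invariant data, each $V_j^n$ is likewise $\mathrm{SU}(2)$-equivariant, so its induced metric is $\mathrm{SU}(2)$-invariant on $S^2$ and hence a constant multiple of the round metric. The precise curvature constant is then recovered by evaluating $\gamma_{j-1}+\gamma_j$ at the single point $z=0$, where (i) and (ii) reduce to trivial computations with standard basis vectors of $\C^{n+1}$.
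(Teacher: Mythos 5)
The paper itself gives no proof of this statement: it is quoted from the reference [Bolton] (Bolton--Jensen--Rigoli--Woodward), so there is no internal argument to compare yours against. Judged on its own terms, your plan is essentially the standard computation from that source and its outline is sound: the base case is right ($\binom{r}{-k}=0$ kills all $k\geq 1$ terms), the recursion $f_{j+1}=\partial_z f_j-\|f_j\|^{-2}\langle\partial_z f_j,f_j\rangle f_j$ is the correct way to generate the harmonic (Frenet) sequence of a holomorphic curve in $\CP^n$, and your two simultaneous claims are consistent with the theorem: the norm identity $\|f_j\|^2=\frac{j!\,n!}{(n-j)!}(1+|z|^2)^{n-2j}$ reproduces $\|f_0\|^2=(1+|z|^2)^n$, agrees with the stated $f_{j,j}(0)$, and yields $\gamma_j=\|f_{j+1}\|^2/\|f_j\|^2=(j+1)(n-j)(1+|z|^2)^{-2}$, whence $\gamma_{j-1}+\gamma_j=(n+2j(n-j))(1+|z|^2)^{-2}$ and, via \eqref{curvaturepsi}, $K(V_j^n)=4/(n+2j(n-j))$. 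Two comments. First, the heart of the explicit part of the theorem --- that the recursion applied to the stated $f_j$ returns exactly the stated $f_{j+1}$, together with the Chu--Vandermonde evaluation of $\|f_{j+1}\|^2$ --- is only sketched; it is a known, finite combinatorial verification (and checks out in low-degree cases), but as written it is the step carrying all the work, so it would need to be carried out for a complete proof. Your $\mathrm{SU}(2)$-equivariance shortcut is a genuine alternative for the metric and curvature assertions: equivariance of $V_0^n$ under the $n$-th symmetric power representation, plus naturality of the Gauss bundle construction, forces each induced metric to be a multiple of the round metric, and evaluation at $z=0$ fixes the constants; but it does not produce the closed form for $f_{j,r}$, so it cannot replace the induction for the full statement. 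Second, your caution that $\langle\partial_z f_j,f_j\rangle$ cannot be replaced by $\partial_z\|f_j\|^2$ for $j\geq 1$ is unnecessary (though harmless): since $\partial_{\bar z}f_j$ is proportional to $f_{j-1}$, which is orthogonal to $f_j$, the identity $\langle\partial_z f_j,f_j\rangle=\partial_z\|f_j\|^2$ does hold along the sequence and can be used to streamline the computation.
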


	\subsection{Primitive harmonic immersions } \label{sec:intro}
	We consider the flag manifold $F_{k_0,\ldots,k_p }$ consisting of all $(p+1)$-tuples  $\Psi=(\psi_0,\dots, \psi_p)$ of mutually orthogonal complex subspaces of $\C^{n}$, so that $\C^n=\psi_0\oplus\ldots \oplus \psi_p$ and $k_j=\dim \psi_j$, for each $j\in\{0,\dots,p\}$. 
	As a homogeneous space, this flag manifold is given by 
	$$F_{k_0,\ldots,k_p }=\frac{\U(n)}{\U(k_0)\times\ldots \times \U(k_p)},$$
	where $\U(n)$ acts on $F_{k_0,\ldots,k_p }$ as  $g\Psi=(g\psi_0,\dots g\psi_p)$ for  $g\in \U(n)$.  
	Moreover, we have the following identification for the tangent space of the flag  manifold at $\Psi$:
	\begin{equation}\label{eq:TFdecomposition}
	T^\C_\Psi F_{k_0,\ldots,k_p }\cong \sum_{i\neq j}\Hom(\psi_i, \psi_{j}).
	\end{equation}
	We distinguish the subbundle $T^1$ of $T^\C F_{k_0,\ldots,k_p }$ whose fiber at $\Psi$ is given by 
	$$T^1_\Psi\cong \sum_{j\in\mathbb{Z}_{p+1}} \Hom(\psi_j, \psi_{j+1}).$$

		\begin{definition}\cite[\S 1.3]{burstall}
		Let $M$ be a Riemann surface. A smooth map $\Psi:M\to F_{k_0,\ldots,k_p }$, with $\Psi=(\psi_0,\ldots,\psi_p)$  is said to be \emph{primitive} if $d\Psi \big(\tfrac{\partial}{\partial z}  \big)$ is a local section of $\Psi^*T^1$, for all local complex coordinate $z$ on $M$. 
\end{definition}

\begin{remark}
	\begin{enumerate}\label{remarkprimitive}
	\item The term ``primitive" was  introduced \cite{burstall} in the more general setting of maps from surfaces into $k$-symmetric spaces. 
	(The  flag manifold $F_{k_0,\ldots,k_p }$ carries a structure of $k$-symmetric space, with $k=p+1$.) See also Chapter 21 of \cite{Guest}.
	\item For $p>1$, if $\Psi:M\to F_{k_0,\ldots,k_p }$ is primitive, then each $\psi_j:M\to G_{k_j}(\C^n)$ is harmonic \cite[Proposition 1.3]{burstall}. (For $p=1$, all maps are primitive.) 
	\end{enumerate}
\end{remark} 
 A result due to M. Black \cite{black} (see  \cite[Proposition 1.1]{burstall}) implies  that, for $p>1$, any primitive map into $F_{k_0,\ldots,k_p }$ is harmonic with respect to all $\U(n)$-invariant metrics of $F_{k_0,\ldots,k_p }$. Any such metric $g$ has the following form \cite[p.104]{Andreas Arvanitoyeorgos}: given $\xi,\eta\in T_\Psi F_{k_0,\ldots,k_p }$, and writing $\xi=\sum\xi_{ij}$, $\eta=\sum\eta_{ij}$ according to \eqref{eq:TFdecomposition}, then 
\begin{equation}\label{invariantmetric}
g(\xi,\eta)=\sum_{i\neq j} \lambda_{ij}\tr \xi_{ij}\eta_{ij}^*,
\end{equation}
for some positive constants $\lambda_{ij}$ satisfying $\lambda_{ij}=\lambda_{ji}$.  Hence, while the Grassmannian admits a unique (up to multiplication by a positive constant) $\U(n)$-invariant metric, the flag $F_{k_0,\ldots,k_p }$ admits infinitely many nonequivalent $\U(n)$-invariant metrics.

	 Given a linearly full holomorphic harmonic map $\psi_0:M\to G_{k_0}(\C^n)$, with harmonic sequence $\psi_0,\ldots, \psi_p$, we can define $\Psi:M\to F_{k_0,\ldots,k_p }$ by setting $\Psi=(\psi_0,\dots,\psi_p)$, where $k_j$ is the dimension of the fibers of $\psi_j$.  It follows directly from the definition of harmonic sequence that  $\Psi$ is primitive. If $\Psi$ is an immersion, then the metric on $U\subset M$ induced by $\Psi$ from a metric on $ F_{k_0,\ldots,k_p }$ of the form  \eqref{invariantmetric} is given by
	 \begin{equation}\label{eq:metric1}
	 \Psi^*g=\sum_{j=0}^{p-1} \lambda_j \gamma_j dzd\bar z,
	 \end{equation}
	 where we denote, for each $j\in\{0,\ldots, p-1\}$,  $\lambda_j=\lambda_{j,j+1}$; $\gamma_j$ is given by \eqref{gammaj}. The corresponding curvature is given by
	 \begin{equation}\label{kpsi}
	 K(\Psi)=-\frac{2}{\sum_{j=0}^{p-1}  \lambda_j \gamma_j } \frac{\partial^2}{\partial z\partial \bar z}\log \sum_{j=0}^{p-1}  \lambda_j \gamma_j .
	 \end{equation}
	 \begin{eg}
	 We consider the $n$-Veronese sequence $V^n_0,\ldots, V_n^n$ and the corresponding primitive map  $\mathcal{V}=(V^n_0,\ldots, V_n^n ):S^2 \to F_{1,\ldots,1}$, which will be called the $n$-\emph{Veronese primitive map}. In view of Theorem \ref{veronesebolton},  given an invariant metric on $F_{1,\ldots,1}$ with positive constants $\lambda_0,\ldots, \lambda_{n-1}$, according to \eqref{eq:metric1}, the induced metric on $S^2$ from the flag is given by
	 $$\mathcal{V}^*g= \sum_{j=0}^{n-1}  \lambda_{j}\frac{(j+1)(n-j)}{(1+z\bar{z})^2}dzd\bar z,$$
which has constant curvature
$$K(\mathcal{V})= \frac{4}{\sum_{j=0}^{n-1}  \lambda_{j}(j+1)(n-j)}.$$
 \end{eg}

	\section{Primitive harmonic immersions of constant curvature} 
	\begin{theorem}\label{principaltheorem} Let $\psi_0:S^2 \to G_{k_0}(\C^n)$  be a linearly full holomorhic map, with harmonic sequence $\psi_0,\ldots, \psi_p$. 
	If the primitive lift $\Psi=(\psi_0,\ldots, \psi_p):S^2\to F_{k_0,\ldots,k_p }$ is an immersion and there exists at least one $\U(n)$-invariant metric on $F_{k_0,\ldots,k_p }$ with respect to which  $\Psi$ has constant curvature, then  $\Psi$ has constant curvature with respect to all $\U(n)$-invariant metrics on $F_{k_0,\ldots,k_p }$; moreover, each $\psi_j:S^2\to G_{k_j}(\C^n)$ is a minimal immersion of constant curvature and constant K\"{a}hler angle.  
\end{theorem}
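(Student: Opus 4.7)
The plan is to exploit that any constant-curvature metric on $S^2$ is, up to Möbius reparametrization, a round metric, and to combine this with the Toda-type structure equations for harmonic sequences into complex Grassmannians in order to rigidify each individual density $\gamma_j$. By Gauss--Bonnet, $K(\Psi)>0$ and $\Psi^*g$ is then a round metric on $S^2$. Precomposing $\psi_0$ with a biholomorphism of $S^2$ does not affect the hypotheses (the new $\psi_0$ remains linearly full and holomorphic, and the whole harmonic sequence is transported accordingly), so I may assume
\[
F \;:=\; \sum_{j=0}^{p-1}\lambda_j\gamma_j \;=\; \frac{C}{(1+z\bar z)^2} \;=\; C\rho,
\]
where $\rho := 1/(1+z\bar z)^2$ satisfies the Liouville equation $\partial_z\partial_{\bar z}\log\rho = -2\rho$ and $C=2/K(\Psi)$.

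I would then bring in the Toda-type structure equations of the harmonic sequence. In the rank-one case (maps into $\CP^{n-1}$) these are the classical identities
\[
\partial_z\partial_{\bar z}\log\gamma_j \;=\; \gamma_{j+1}-2\gamma_j+\gamma_{j-1},
\]
with endpoint conventions $\gamma_{-1}=\gamma_p=0$; analogous (matrix) equations hold in the general Grassmannian setting, involving both the $\gamma_j$ and the determinants of the induced Hermitian metrics on the bundles $\psi_j$. Substituting the ansatz $\gamma_j=c_j\rho$ reduces the system to the linear recurrence $c_{j+1}-2c_j+c_{j-1}=-2$ with $c_{-1}=c_p=0$, whose unique solution is $c_j=(j+1)(p-j)$ (modified by the ranks $k_j$ in the general case), precisely the Veronese weights of Theorem~\ref{veronesebolton}.

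The main step, and the main obstacle, is to show that the single identity $F=C\rho$ together with the Toda equations forces $\gamma_j=c_j\rho$ individually for each $j$. My plan is to proceed by induction on the length $p$, peeling off a term at the end of the sequence using $\gamma_p=0$ and the Toda equation to reduce to a shorter sequence of constant curvature; complementarily, integrating each Toda equation over $S^2$ and applying Gauss--Bonnet to the harmonic map $\psi_j$ yields global identities for $\int_{S^2}\gamma_j\,\tfrac{\ii}{2}dz\wedge d\bar z$ in terms of the degrees of the bundles of the harmonic sequence, which together pin down the individual $\gamma_j$ from their weighted sum. I expect the global rigidity of harmonic sequences from $S^2$ (Wolfson, Burstall--Wood) to be decisive here.

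Once $\gamma_j=c_j\rho$ is established, the conclusions are immediate: \eqref{kpsi} gives $K(\Psi)=4/\sum_j\lambda_j c_j$, constant for every choice of positive $\lambda_j$, proving that constancy of curvature holds for all invariant metrics simultaneously; \eqref{curvaturepsi} gives each $\psi_j$ constant curvature $K(\psi_j)=4/(c_{j-1}+c_j)$; and \eqref{kahler} gives constant Kähler angle $\theta_j$ determined by $\tan^2(\theta_j/2)=c_{j-1}/c_j$. Minimality of each $\psi_j$ follows from Remark~\ref{remarkprimitive} together with conformality of the induced metric.
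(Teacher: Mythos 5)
The heart of the theorem is precisely the step you flag as "the main step, and the main obstacle": passing from the single pointwise identity $\sum_{j}\lambda_j\gamma_j=C\rho$ (for \emph{one} fixed choice of weights $\lambda_j$) to the individual identities $\gamma_j=c_j\rho$. Your proposal does not prove this step, and the two mechanisms you suggest for it do not plausibly close the gap. Integrating the Toda equations over $S^2$ only recovers the degrees $\int_{S^2}\gamma_j\,\tfrac{\ii}{2}dz\wedge d\bar z$, which are topological; knowing these integrals together with one pointwise linear relation does not determine the functions $\gamma_j$ pointwise. The "peeling off" induction has no clear mechanism either: the hypothesis is about a weighted sum of \emph{all} the $\gamma_j$, and there is no evident way to produce from it a constant-curvature hypothesis for the truncated sequence; moreover the scalar identity $\partial_z\partial_{\bar z}\log\gamma_j=\gamma_{j+1}-2\gamma_j+\gamma_{j-1}$ is valid only in the rank-one case and only away from the zeros of $\gamma_j$ (branch points of the second fundamental forms), while in the general Grassmannian setting ($k_j>1$) the unintegrated structure equations involve determinant densities of the induced metrics on the bundles $\psi_j$, not the traces $\gamma_j$, so even the system you propose to work with is not the one you wrote. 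Substituting the ansatz $\gamma_j=c_j\rho$ and solving the resulting recurrence is a consistency check with the Veronese weights, not a derivation.

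For comparison, the paper's proof replaces the Toda system by a holomorphicity-plus-factorization argument: the partial sums $\psi^{(j)}=\psi_0\oplus\cdots\oplus\psi_j$ are holomorphic, and composing with the Pl\"ucker embedding one gets $\gamma_j=\partial_z\partial_{\bar z}\log\beta_j$ with $\beta_j=\|\hat\sigma_j\|^2$ the squared norm of a \emph{polynomial} section (this is where $S^2$ enters). The constant-curvature hypothesis then reads $\beta_0^{\lambda_0}\cdots\beta_{p-1}^{\lambda_{p-1}}=(1+z\bar z)^{\alpha}e^{f}\overline{e^{f}}$, and complexifying in $(z,w)$ together with irreducibility of $1+zw$ in the polynomial ring forces $\beta_j=C_j(1+z\bar z)^{\alpha_j}$, hence $\gamma_j=\alpha_j/(1+z\bar z)^2$ for each $j$ separately; all the stated conclusions then follow as in your final paragraph. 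Some ingredient of this kind — exploiting rationality of the osculating curves over $S^2$ and unique factorization, as in Bolton et al.\ and Chi--Zheng — is what your outline is missing, and without it the argument does not go through.
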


		\begin{proof}  
			Let $g$ be  a $\U(n)$-invariant metric on $F_{k_0,\ldots,k_p }$ with respect to which  $\Psi$ has constant curvature. 
			By Gauss-Bonnet theorem, the constant curvature must be positive; hence, by Minding theorem, we have 
			\begin{equation}\label{metric2}
				\Psi^*g=\frac{\alpha}{(1+z\bar z)^2}dzd\bar z,
			\end{equation}
			for some constant $\alpha>0$ and local complex coordinate $z$ on $\C\subset S^2$. On the other hand, by \eqref{eq:metric1},
			$\Psi^*g=\sum_{j=0}^{p-1} \lambda_j \gamma_jdzd\bar z$  for some positive constants $\lambda_0,\ldots,\lambda_{p-1}$.

		Set $k^{(j)}=k_0+\ldots+k_j$ and consider the holomorphic map $$\psi^{(j)}=\psi_0\oplus\ldots\oplus \psi_j:S^2\to G_{k^{(j)}}(\C^n).$$   Following \cite{Bolton,Chi-Zheng,Jiao2008,JiaoPeng2003}, we take the composition of $\psi^{(j)}$ with the Pl\"{u}cker embedding  
			 $\iota: G_{k^{(j)}}(\C^n)\to \CP^{{n \choose k^{(j)}}-1}$  to obtain  a holomorphic map 
			 $\sigma_j:S^2\to\CP^{{n \choose k^{(j)}}-1}$. This holomorphic map $\sigma_j$ satisfies
			  ${\sigma_j}^*h_j=\gamma_jdzd\bar z,$ where $h_j$ stands for the Fubini-Study metric on $\CP^{{n \choose k^{(j)}}-1}$.
			 Let $\hat \sigma_j:\C\to \C^{{n \choose k^{(j)}}}$ be a holomorphic local section of $\sigma_j$. Without loss of generality, we can assume that $\hat \sigma_j$ is nowhere zero, as we can always remove the greatest common divisor of its components.  Set $\beta_j=\|\hat \sigma_j\|^2,$ which is polynomial in $z$ and $\bar z$. By \eqref{metricgrasshol}, we have
		\begin{equation}\label{eq:logbetaj}
			\frac{\partial^2 }{\partial z \partial \bar z} \log \beta_j=\gamma_j,
		\end{equation}    
		hence 
		\begin{equation}\label{metric3}
			\Psi^*g=\frac{\partial^2 }{\partial z \partial \bar z} \log\left(\beta_0^{\lambda_0}\ldots  \beta_{p-1}^{\lambda_{p-1}}\right) dzd\bar z
		\end{equation}
		Combining \eqref{metric2} and \eqref{metric3}, we deduce that the following holds on $\C$:
		\begin{equation*}
			\frac{\partial ^2}{\partial{\bar z} \partial z} \log \frac{ \beta_{0}^{\lambda_{0}}\ldots \beta_{p-1}^{\lambda_{p-1}}}{(1+z \bar{z})^\alpha}=0.
		\end{equation*}
		So, $\log \frac{ \beta_{0}^{\lambda_{0}}\ldots \beta_{p-1}^{\lambda_{p-1}}}{(1+z \bar{z})^\alpha}$ is a harmonic function on $\mathbb{C},$ which implies that there exists an entire function $f$ such that 
		\begin{equation}\label{eq:betah}
			\log \frac{ \beta_{0}^{\lambda_{0}}\ldots \beta_{p-1}^{\lambda_{p-1}}}{(1+z \bar{z})^\alpha}=f+\bar{f}.
		\end{equation}
		Exponentiate \eqref{eq:betah} to obtain 
		\begin{equation}\label{eq:betah1}
		 \beta_{0}^{\lambda_{0}}\ldots \beta_{p-1}^{\lambda_{p-1}}= (1+z\bar z)^\alpha e^{f}\overline{e^{f}}.
		 \end{equation}
			Since all polynomials $\beta_j(z,\bar z)$, with $j\in\{0,\ldots,p-1\}$, and the factor $1+z\bar z$ are nonvanishing, the functions
		$z\mapsto \beta_j^{\lambda_j}(z,\bar z)$ and $z\mapsto  (1+z\bar z)^\alpha$
		are real analytic on $\mathbb{C}$, even when $\lambda_j$ and $\alpha$ are not integers. The corresponding complexifications are given, respectively, by 
		$(z,w)\mapsto \beta_j^{\lambda_j}(z,w)$ and $(z,w)\mapsto  (1+zw)^\alpha,$
		considering the principal branch of each multivalued function $Z\mapsto Z^q$. 
		Now, take the complexifications of both sides of  \eqref{eq:betah1};
	by uniqueness of complexification,
		we have 	
		\begin{equation*}\label{eq:betah2}
			\beta_{0}^{\lambda_{0}}(z,w)\ldots \beta_{p-1}^{\lambda_{p-1}}(z,w)=(1+zw)^\alpha e^{f(z)}e^{\bar f (w)}
		\end{equation*}
		on an open set $V\subset \mathbb{C}\times  \mathbb{C}$ containing $\{(z,w):\, w=\bar z \}$.	Hence, if not empty, the zero set of  $\beta_j(z,w)$
		coincides with the zero set of the irreducible factor  $1+zw$, which implies that	\begin{equation}\label{eq:betajCj}
			\beta_j(z,\bar z)=(1+z\bar z)^{\alpha_j}C_j
		\end{equation}
		for some real constant $C_j$ and nonnegative integer  $\alpha_j$. 
		It follows from \eqref{eq:logbetaj} and \eqref{eq:betajCj}  that
		$$ \gamma_{j}= \frac{\alpha_j}{(1+z \bar{z})^2}, \quad j\in\{0,\ldots,p-1\}.$$
		If $\alpha_j=0$ for some $j\in\{0,\ldots,p-1\},$ then $\gamma_j=0$, which implies that the holomorphic map $\sigma_j$ is constant, giving a contradiction with the hypothesis of $\psi_0$ being full. Hence, $\alpha_j>0$ for all $j\in\{0,\ldots,p-1\}$. Consequently,   taking account of  \eqref{curvaturepsi} and \eqref{kahler}, we see that all $\psi_j$  are   immersions of constant curvature and constant K\"{a}hler angle.  
		
	If $\tilde g$ is any other $\U(n)$-invariant metric on $F_{k_0,\ldots,k_p}$, then,  for some positive constants $\tilde \lambda_0,\ldots,\tilde \lambda_{p-1}$, we have 
		$$\Psi^*\tilde g= \sum_{j=0}^{n-1} \tilde \lambda_j \gamma_jdzd\bar z= \frac{\sum_{j=0}^{n-1}\tilde \lambda_j \alpha_j}{(1+z \bar{z})^2}dzd\bar z.$$ Using \eqref{kpsi} to compute the curvature, it follows that $\Psi$ has constant curvature
		$\frac{4}{\sum_{j=0}^{n-1}\tilde \lambda_j \alpha_j}$ with respect to $\tilde g$
	\end{proof}
	
We say that a primitive map $\Psi=(\psi_1,\ldots,\psi_p)$  is \emph{full} if all $\psi_j$ are linearly full. 

		\begin{corollary}\label{principalcorollary} Let $\Psi:S^2 \rightarrow F_{1,\ldots,1}$  be a full primitive immersion.
	If there exists at least one $\U(n)$-invariant metric on $F_{1,\ldots,1}$ with respect to which  $\Psi$ has constant curvature, then  $\Psi$ is  unitarily congruent with the $(n-1)$-Veronese primitive map.
	\end{corollary}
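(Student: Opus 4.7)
The plan is to reduce the corollary to Theorem \ref{principaltheorem} combined with Calabi's classification of linearly full holomorphic immersions of constant curvature into a complex projective space. The essential preparatory step is to show that, after a cyclic relabelling of its components, $\Psi=(\psi_0,\ldots,\psi_{n-1})$ coincides with the primitive lift of a linearly full \emph{holomorphic} map $\psi_0:S^2\to\CP^{n-1}$, so that the hypotheses of Theorem \ref{principaltheorem} are met.

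To carry out this reduction, I would unpack what primitivity means at the full flag $F_{1,\ldots,1}$. Under \eqref{eq:TFdecomposition} together with $T^1_\Psi\cong\sum_{j\in\mathbb{Z}_{n}}\Hom(\psi_j,\psi_{j+1})$, the condition $d\Psi(\tfrac{\partial}{\partial z})\in T^1$ forces the image of $A'_{\psi_j}$ to lie in $\psi_{j+1}$ for every $j$ modulo $n$. Each $\psi_j:S^2\to\CP^{n-1}$ is harmonic, and every harmonic map from $S^2$ into $\CP^{n-1}$ is pseudoholomorphic, hence has a finite harmonic sequence. If every $A'_{\psi_j}$ were nonzero, the identifications $\psi_{j+1}=(\psi_j)_{1}$ (equality of line bundles, one contained in the other) would produce an unbounded cyclic harmonic sequence, contradicting pseudoholomorphy. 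So some $A'_{\psi_j}$ must vanish; after a cyclic reindexing I may take $j=n-1$, so that $\psi_{n-1}$ is antiholomorphic. Tracking the Gauss-bundle relations backwards along the cycle then identifies $\psi_0,\psi_1,\ldots,\psi_{n-1}$ as precisely the harmonic sequence of the holomorphic map $\psi_0$, and the fullness assumption on $\Psi$ forces $\psi_0$ to be linearly full in $\CP^{n-1}$.

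At this point Theorem \ref{principaltheorem} applies and guarantees, in particular, that $\psi_0$ is a holomorphic immersion of constant curvature. By Calabi's theorem, recalled in the preliminaries, the unique (up to unitary congruence) linearly full holomorphic immersion of constant curvature of $S^2$ into $\CP^{n-1}$ is the $(n-1)$-Veronese map $V^{n-1}$, so $\psi_0=g\,V^{n-1}$ for some $g\in\U(n)$. Since the Gauss-bundle construction is manifestly $\U(n)$-equivariant, applying $g$ to the entire harmonic sequence gives $\psi_j=g\,V_j^{n-1}$ for every $j$, whence $\Psi=g\,\mathcal{V}$, as required.

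The principal obstacle I expect is the first reduction: extracting, from the bare primitivity condition on $F_{1,\ldots,1}$ combined with pseudoholomorphy forced by the $S^2$ domain, the identification of $\Psi$ with the primitive lift of a holomorphic $\psi_0$ in the precise sense required by Theorem \ref{principaltheorem}, together with the accompanying cyclic reindexing. Once this identification is in place, the remaining steps are direct invocations of already stated results.
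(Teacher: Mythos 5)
Your proposal is correct and follows essentially the same route as the paper's proof: each $\psi_j$ is harmonic, hence pseudoholomorphic on $S^2$, so after a cyclic relabelling one may take $\psi_0$ holomorphic with $\psi_0,\ldots,\psi_{n-1}$ its harmonic sequence, and then Theorem \ref{principaltheorem} plus Calabi's rigidity give the conclusion. You merely spell out more explicitly than the paper why some $A'_{\psi_j}$ must vanish and why $\U(n)$-equivariance of the Gauss-bundle construction carries the congruence of $\psi_0$ with $V^{n-1}$ over to the whole lift $\Psi$.
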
  
	\begin{proof}
Since $\Psi=(\psi_0,\ldots, \psi_{n-1})$ is a full primitive map, each $\psi_j$, with $j\in\{0,\ldots,n-1\}$, is a full harmonic map into $\CP^{n-1}$ (see Remark \ref{remarkprimitive}),  and the first Gauss bundle of $\psi_j$ is either $\{0\}$ or $\psi_{j+1}$. 
Since all harmonic maps  from $S^2$ into $\CP^{n-1}$ are pseudoholomorphic, we can reorder (this corresponds to a unitarily congruence on the full flag) if necessary, so that $\psi_0:S^2\to \C P^{n-1}$ is holomorphic and  $\psi_0,\ldots,\psi_n$ is the corresponding harmonic sequence. By Theorem \ref{principaltheorem}, $\psi_0$ is an immersion of constant curvature, hence, by Calabi's result, $\psi_0$ is unitarily congruent with the $(n-1)$-Veronese map. 
	\end{proof}

	\begin{example}
	Let $\psi^a_0 :S^2 \rightarrow \mathbb{C}P^2 $ be the holomorphic immersion defined by 
	$$ \psi_0^a(z)= \left[1,az,z^2\right],\quad a\in \mathbb{C}\setminus\{0\}.$$ 
Observe that $\psi_0^{\sqrt 2}=V^2$. The nowhere zero local section
	$f_0(z) = (1,az,z^2)$ satisfies
	$$ \left|f_0\right|^2 =1 +|a|^2 |z|^2 + |z|^4, \quad \quad \left \langle \frac{ \partial f_0}{\partial z}, f_0 \right \rangle = \bar{z}(|a|^2+2|z|^2).$$
Set $f_1=A'_{\psi^a_0}(f_0)$.  We have
\begin{align*}
f_1(z)&=\frac{\partial f_0}{\partial z}-\frac{1}{|f_0|^2}\Big< \frac{\partial f_0}{\partial z}, f_0\Big> f_0\\=& \frac{1}{1+|a|^2|z|^2+ |z|^4}\left(-\bar{z}(|a|^2+2|z|^2),a(1-|z|^4),z(2+|a|^2|z|^2)\right).
\end{align*}
This is a nowhere zero local section of $\psi_1^a:S^2 \rightarrow \mathbb{C}P^2 $. Since
	$$\left|f_1\right|^2= \frac{4| z|^2+|a|^2(1+|z|^4)}{1 +|a|^2 |z|^2 + |z|^4},$$
	we get 
	$$ \gamma_0 = \frac{\left| f_1 \right| ^2}{\left|f_0 \right|^2} = \frac{4| z|^2+|a|^2(1+|z|^4)}{(1 +|a|^2 |z|^2 + |z|^4)^2}. $$
	Similarly,  the local section $f_2= A'_{\psi^a_1}(f_1)$ of $\psi^a_2$ is given by
	$$f_2(z)=\frac{1}{4| z|^2+|a|^2(1+|z|^4)} \left({2|a|^2 | z|^2}, {4a\bar{z}}, {2|a|^2}\right),$$
and
	$$\gamma_1= \frac{\left|f_2\right|^2}{\left|f_1\right|^2} =\frac{4|a|^2(1+|a|^2|z|^2+|z|^4)}{(4|z|^2+|a|^2(1+|z|^4))^2}.$$
	Let $\Psi^a = (\psi^a_0, \psi^a_1, \psi^a_2): S^2 \rightarrow F_{1,1,1}$ be the corresponding primitive immersion. The curvature on $S^2$  induced by $\Psi^a$ from an invariant metric on $F_{1,1,1}$ with parameters  $\lambda_0,\lambda_1$  is given by
	\begin{equation}\label{Klambda}
	K_{\lambda_0,\lambda_1}(\Psi^a)=-\frac{2}{\lambda_{0}\gamma_0+ \lambda_1 \gamma_1} \frac{\partial ^2}{\partial z \partial{\bar{z}}}\log(\lambda_{0}\gamma_0+ \lambda_1 \gamma_1). 
	\end{equation}
In Figure \ref{graph1},	we plot the graph (using \emph{Mathematica} software) of  	$K_{1,1}(\Psi^a)$ 
	and  $K_{1,2}(\Psi^a)$ for different values of $a$, as a function of the  latitude angle $\varphi$ on $S^2$, that is, taking $|z|=\cot \frac{\varphi}{2}$, with $\varphi\in[0,\pi]$. 
	\begin{figure}[h!]
	\begin{center}
		\includegraphics[width=8.25cm]{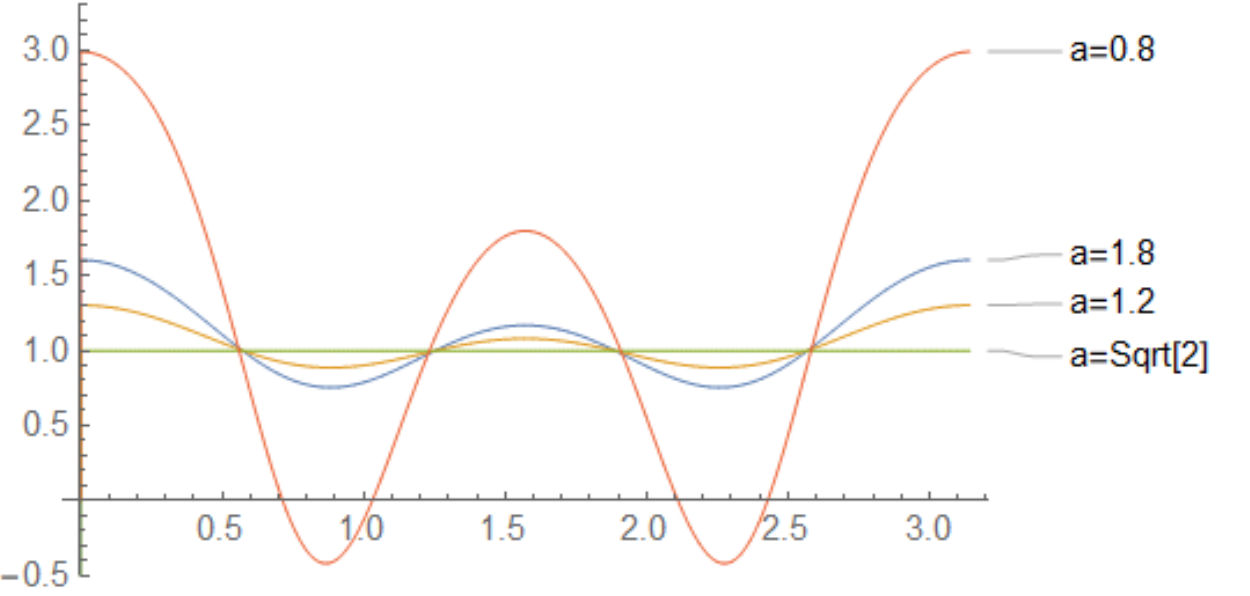}\includegraphics[width=8.25cm]{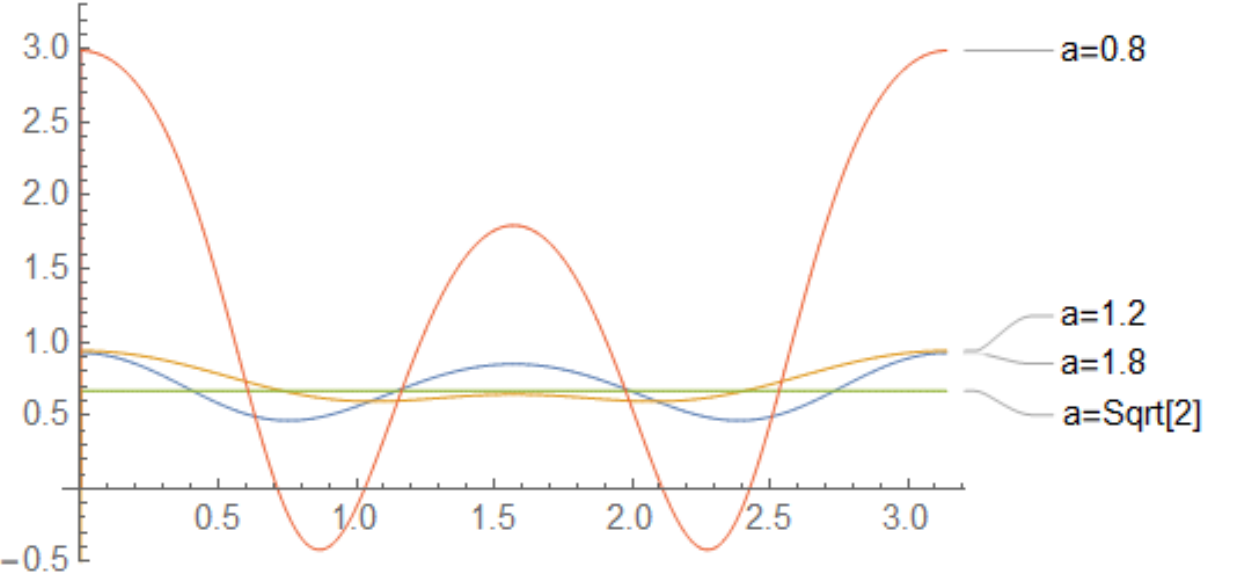}
	\end{center}
	\caption{Graphs of $K_{1,1}(\Psi^a)$ (left) and  $K_{1,2}(\Psi^a)$ (right).}
	\label{graph1}
\end{figure}
\end{example}

\section{Which invariant metric maximizes the area?}
Let $\psi_0:S^2 \to G_{k_0}(\C^n)$  be
a linearly full holomorhic map, with harmonic sequence $\psi_0,\ldots, \psi_p$. 
Suppose that the primitive lift $\Psi=(\psi_0,\ldots, \psi_p):S^2\to F_{k_0,\ldots,k_p }$ is an immersion. For a given choice of a $\U(n)$-invariant metric on the flag manifold, we have seen  the induced metric on $S^2$ takes locally the form
$\Psi^*g=\sum_{j=0}^{p-1} \lambda_j \gamma_j dzd\bar z.$
The area of $S^2$ with respect to this metric is given by 
\begin{equation}\label{Areainv}
	A(\Psi)=\frac{1}{2i}\sum_{j=0}^{p-1} \lambda_j \int_{S^2}\gamma_j d\bar z \wedge dz=\pi 
	\sum_{j=0}^{p-1} \lambda_j\delta_j, 
\end{equation}
where $$ \delta_j=\frac{1}{2\pi i}\int_{S^2}\gamma_j d\bar z \wedge dz$$ is the degree (see \cite[\S 3.]{Bolton} and \cite[Chap.1]{Griffiths}) of the 
holomorphic map $\iota \circ \psi^{j}: S^2\to \CP^{{n \choose k^{(j)}}-1}$, being $\psi^{j}=\psi_0\oplus\ldots \oplus \psi_j$ (with rank $k^{(j)}$) and $\iota$ the Plücker embedding of $G_{k^{(j)}}(\C^n)$. 
Under the normalization 
$\lambda_0^2+\ldots +\lambda_{p-1}^2=1,$
we can now easily address the problem of finding the invariant metric on the flag manifold that maximizes the induced area of $S^2$.  In view of \eqref{Areainv}, such invariant metric has parameters
$$\lambda_0=\frac{\delta_0}{\sqrt{\sum_{j=0}^{p-1} \delta_j^2}},\ldots,\lambda_{p-1}=\frac{\delta_{p-1}}{\sqrt{\sum_{j=0}^{p-1} \delta_j^2}}.$$

\begin{example}
		Consider a full holomorphic map $\psi_0:S^2\to \mathbb{C}P^2$, \emph{totally unramified}, with primitive lift
		$\Psi=(\psi_0,\psi_1,\psi_2):S^2\to F_{1,1,1}.$
		Since $\psi_{0}$ is totally unramified, it follows from \cite[Equation 3.25]{Bolton} that the degrees $ \delta_{i}$ are given  by
		$\delta_0=\delta_1=2$. 
		So, the normalized invariant metric which maximizes the area of $\Psi$ has parameters
		$\lambda_0=\lambda_1=\frac{1}{\sqrt 2}.$
\end{example}

\begin{example}
Let $\psi_0:S^2 \to G_{2}(\C^5)$  be the constant curved holomorphic immersion
		locally spanned by 
		$$f_0=\left(1, 0, 2z, 2z^2, z^2\right),\qquad g_0=\left(0, 1, 0, z^2,0\right).$$ 
	Straighforward computations show that 
		\begin{equation*}
		\gamma_{0}=\frac{\partial^2}{\partial z\partial{\bar z}} \log\left\|{f_0} \wedge g_0\right\|^2=\frac{\partial^2}{\partial z\partial{\bar z}} \log(1+z\bar{z})^4=\frac{4}{(1+z\bar{z})^2}.
	\end{equation*}
	and 
		\begin{align*}
		\gamma_1=\frac{\partial^2}{\partial z\partial{\bar z}}\log 	\left\|	g_0 \wedge f_0 \wedge\frac{\partial g_0}{\partial z }  \wedge \frac{\partial f_0}{\partial z} \right\|^2
		= \frac{1+|z|^2(4+|z|^2)}{(1+|z|^2(1+|z|^2))^2}.
	\end{align*}
	From this we see that the first Gauss bundle $\psi_1$ is a non antiholomorphic vector bundle of  rank $2$, so $\psi_1:S^2\to G_2(\mathbb{C}^5)$ and $\psi_2:S^2\to \C P^4$. Moreover, 
	$\psi_1$ is an immersion of nonconstant curvature. Hence, by  Theorem \ref{principaltheorem}, the primitive map $\Psi=(\psi_0,\psi_1,\psi_2):S^2\to F_{2,2,1}$ has nonconstant curvature with respect to all $\U(5)$-invariant metrics on $ F_{2,2,1}$.

Since $\delta_0=4$ and $\delta_1=2$, we see that, under the normalization $\lambda_0^2+\lambda_1^2=1$,  the metric  which maximizes the area of $S^2$ is given by $\Psi^*g= \left(\lambda_0\gamma_{0}+ \lambda_1\gamma_{1}\right)dzd\bar z$ with $\lambda_0=\frac{2}{\sqrt 5}$, $\lambda_1=\frac{1}{\sqrt 5}$.
		 The curvature on $S^2$ induced by $\Psi$ from an invariant metric on $F_{2,2,1}$ is again given by \eqref{Klambda}.
	Figure \ref{areagraph} shows the graphs  of the curvature $K_{\lambda_0,\lambda_1}(\Psi)$  for different values of $\lambda_0,\lambda_1$, as  functions of the  latitude angle $\varphi$ on $S^2$.
	
	\begin{figure}[h!]
		\begin{center}
			\includegraphics[width=8.25cm]{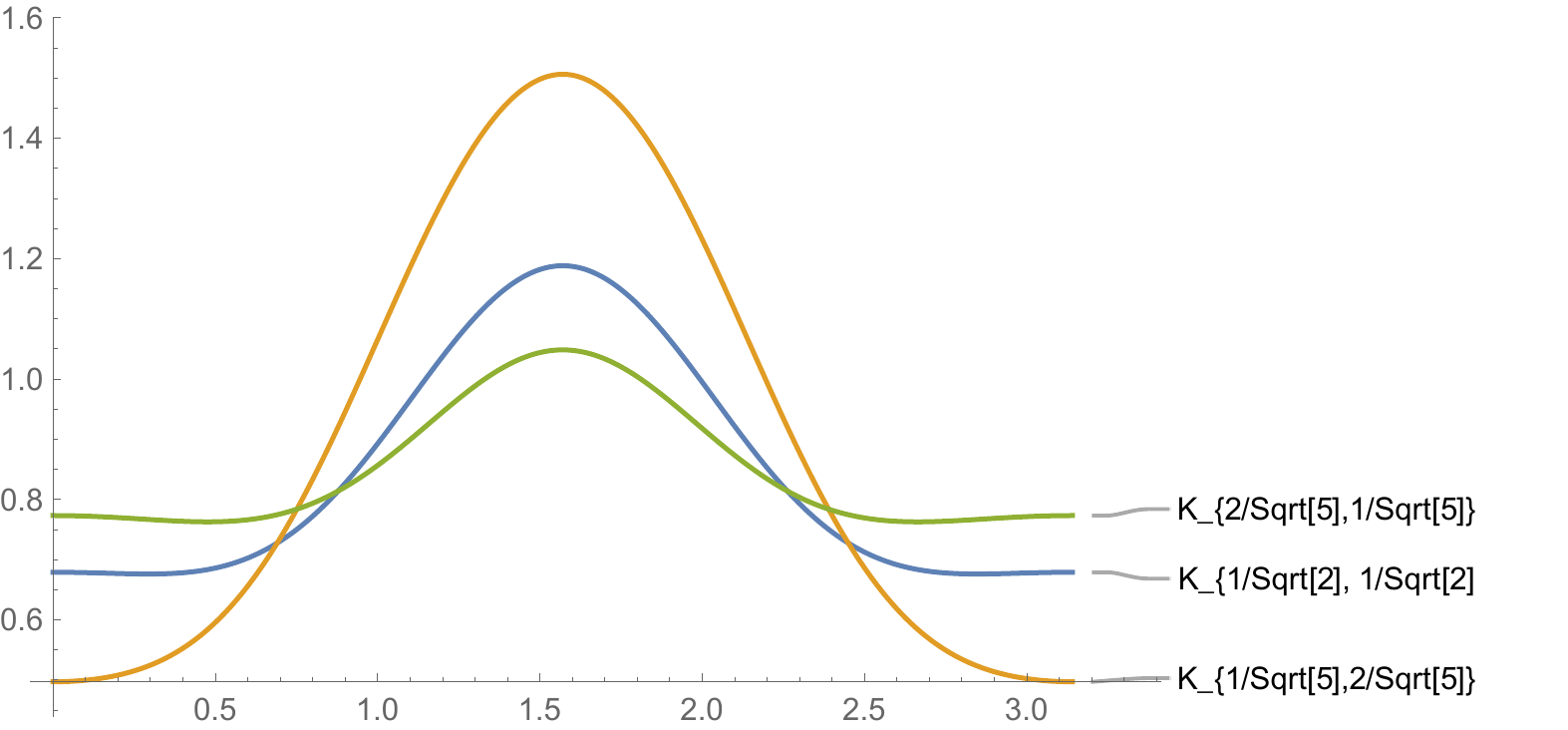}
		\end{center}
		\caption{}
		\label{areagraph}
	\end{figure}
	\end{example}

\begin{example}
In \cite{ChiXieXU}, the authors found an interesting example of a constant curved	holomorphic immersion $\psi_0:S^2 \to G_{2}(\C^5)$  of degree $\delta_0=6$: this is   
	locally spanned by 
$$f_0=\left(1, 0, -\sqrt{6}z^2, -2z^3, -3z^4\right),\qquad g_0=\left(0, 1, \sqrt 6 z, 3z^2,4z^3\right).$$ 
Straighforward computations show that 
\begin{equation*}
	\gamma_{0}=\frac{6}{(1+|z|^2)^2},\qquad \gamma_1=4\frac{1+9|z|^4+20|z^6|+9|z|^8+|z|^{12}}{(1+4|z|^2+4|z|^6+|z|^8)^2}.
\end{equation*}
From this we see that the first Gauss bundle $\psi_1$ is a non antiholomorphic vector bundle of  rank $2$, so $\psi_1:S^2\to G_2(\mathbb{C}^5)$ and $\psi_2:S^2\to \C P^4$. Moreover, 
$\psi_1$ is an immersion  of nonconstant curvature. Again, the primitive map $\Psi=(\psi_0,\psi_1,\psi_2):S^2\to F_{2,2,1}$ has nonconstant curvature with respect to all $\U(5)$-invariant metrics on $ F_{2,2,1}$.	

Since $\delta_0=6$ and by straightforward computation $\delta_1=4$, we see that, under the normalization $\lambda_0^2+\lambda_1^2=1$,  the metric which maximizes the area of $S^2$ has parameters $\lambda_0=\frac{3}{\sqrt{13}}$, $\lambda_1=\frac{2}{\sqrt{13}}$.
Figure \ref{areagraph1} shows the graphs  of the curvature $K_{\lambda_0,\lambda_1}(\Psi)$  for different values of $\lambda_0,\lambda_1$, as  functions of the  latitude angle $\varphi$ on $S^2$.  
	\begin{figure}[h!]
	\begin{center}
		\includegraphics[width=8.25cm]{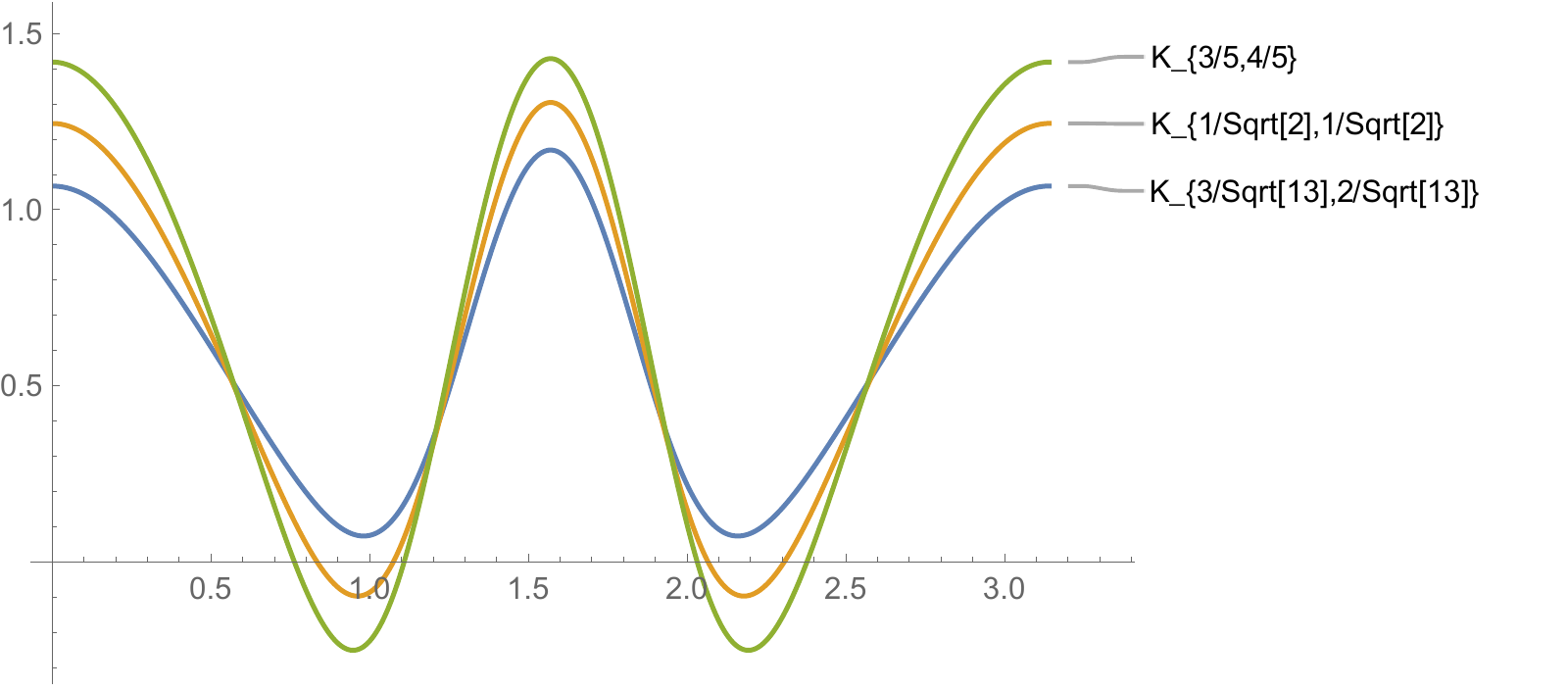}	
	\end{center}
\caption{}\label{areagraph1}
\end{figure}
	
\end{example}

\section{Primitive harmonic immersions of constant curvature of a simply connected surface.}
In this section our  aim is to generalize Corollary \ref{principalcorollary} to primitive harmonic immersions of a simply connected surface, not necessarily closed. The technique introduced by Q.-S. Chi and Y. Zheng \cite{Chi-Zheng} will play an important role in this generalization. 

Let $M$ be a simply connected surface and $\psi_0:M\to \C P^n$ be a full holomorphic curve. Let $\Psi=(\psi_0,\ldots, \psi_{n}):M\to F_{1,\ldots,1}$ be the corresponding primitive map, which is assumed  to be an immersion, and $\sigma_j:M\to\CP^{{n+1 \choose j+1}-1}$ be the $j$-th osculating curve of $\psi_0$.  On the full flag manifold $F_{1,\ldots,1}$, we fix an  invariant metric $g=ds^2_F$ such that 
\begin{equation}\label{kinteger}
\Psi^*ds^2_F=\sum_{j=0}^{n-1}k_j\gamma_jdzd\bar z, \quad \mbox{where each $k_j$ is a positive integer.}
\end{equation}
Recall that, in view of \eqref{metricgrasshol}, $\gamma_j=	\frac{\partial^2 }{\partial z \partial \bar z} \log \|\hat \sigma_{j} \|^2,$ where $\hat \sigma_j$ is a nonvanishing section of $\sigma_j$. 

 We define 
$\eta:M\to \CP^{N}$ by
$$\eta=\sigma_0^{k_0}\otimes\sigma_1^{k_1}\otimes \ldots \otimes \sigma^{k_{n-1}}_{n-1},$$
where
$$N=L_0^{k_0}\ldots L_{n-1}^{k_{n-1}}-1,\quad \mbox{with $L_j={n+1 \choose j+1}$}.$$
Here we are denoting 
$$\sigma_j^{k_j}=\underbrace{\sigma_{j}\otimes \ldots \otimes\sigma_{j}}_{\mbox{$k_j$ times}},$$
for each $j\in \{0,1,...,n-1\}$. Notice that the map $\eta$ is \emph{holomorphic}, because this map is defined as a tensor product of holomorphic osculating curves.

\begin{lemma}\label{same metric}  Under the above assumptions, 
$$\eta^* ds^2_{\CP^N} =\Psi^*ds^2_{F},$$
where  $ds^2_{\CP^N}$ stands for the Fubini-Study metric on $\CP^N$. 
\end{lemma}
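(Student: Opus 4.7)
The plan is to reduce the identity to the elementary fact that the logarithm of a product is the sum of logarithms, after choosing compatible local sections.

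First I would pick a local nonvanishing holomorphic section $\hat\sigma_j$ of $\sigma_j$ on a coordinate chart $U\subset M$, which is possible because each osculating curve $\sigma_j$ is holomorphic. Then a natural local nonvanishing holomorphic section of $\eta$ is
$$\hat\eta \;=\; \hat\sigma_0^{\otimes k_0}\otimes \hat\sigma_1^{\otimes k_1}\otimes \ldots \otimes \hat\sigma_{n-1}^{\otimes k_{n-1}}.$$
Using the canonical Hermitian inner product on a tensor product of Hermitian spaces, which is multiplicative on simple tensors, one has
$$\|\hat\eta\|^2 \;=\; \|\hat\sigma_0\|^{2k_0}\,\|\hat\sigma_1\|^{2k_1}\cdots \|\hat\sigma_{n-1}\|^{2k_{n-1}}.$$

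Next I would apply formula \eqref{metricgrasshol} (specialized to the projective space case already recorded in the excerpt): for any holomorphic map into $\CP^N$ with local nonvanishing lift $\hat\eta$, the induced Fubini--Study metric is
$$\eta^*ds^2_{\CP^N} \;=\; \frac{\partial^2}{\partial z\partial\bar z}\log\|\hat\eta\|^2\, dz d\bar z.$$
Substituting the expression for $\|\hat\eta\|^2$ and splitting the logarithm of the product into a sum gives
$$\eta^*ds^2_{\CP^N} \;=\; \sum_{j=0}^{n-1} k_j\,\frac{\partial^2}{\partial z\partial\bar z}\log\|\hat\sigma_j\|^2\, dz d\bar z \;=\; \sum_{j=0}^{n-1} k_j\,\gamma_j\, dz d\bar z,$$
where the last equality is the already recorded identity $\gamma_j = \partial_z\partial_{\bar z}\log\|\hat\sigma_j\|^2$. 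Comparison with the hypothesis \eqref{kinteger} yields $\eta^*ds^2_{\CP^N} = \Psi^*ds^2_F$.

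There is essentially no obstacle beyond checking that the ambient metric conventions are consistent. The one small point that needs attention is the multiplicativity of the tensor norm, which relies on the fact that the Fubini--Study metric on $\CP^N$ is induced from the \emph{standard} Hermitian product on $\C^{N+1}$ and that we use the \emph{canonical} Hermitian product on the tensor product of the ambient vector spaces $\C^{L_j}$ (so that $\|v\otimes w\|=\|v\|\cdot\|w\|$ on decomposable tensors); both are the conventions fixed earlier in the paper, so no rescaling ambiguity arises.
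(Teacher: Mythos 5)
Your proof is correct and follows essentially the same route as the paper: take the tensor-product local section $\hat\eta=\hat\sigma_0^{k_0}\otimes\cdots\otimes\hat\sigma_{n-1}^{k_{n-1}}$, use multiplicativity of the Hermitian norm on simple tensors, split the logarithm, and apply \eqref{metricgrasshol} together with \eqref{kinteger}. Your additional remark on the compatibility of the Hermitian conventions is a harmless refinement of what the paper leaves implicit.
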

\begin{proof}Let $\hat\eta:\C \to \C^{N+1}$ be a local section of  $\eta$ of the form
$$\hat\eta= \hat \sigma_0^{k_0} \otimes \hat \sigma_1^{k_1}  \otimes\ldots \otimes \hat \sigma_{n-1}^{k_{n-1}}, $$ where, for each $j$, $\hat{\sigma_{j}} : \C \to \C^{L_j}$ is a nonvanishing local section of the osculating curve $\sigma_{j}$. Taking norm squares yields
$\|\hat\eta \|^2=  \|\hat \sigma_0\|^{2k_0}\ldots \|\hat \sigma_{n-1}\|^{2k_{n-1}},$
then
\begin{align*}
	\frac{\partial^2 }{\partial z \partial \bar z} \log \|\hat\eta \|^2= k_{0}\frac{\partial^2 }{\partial z \partial \bar z}\log\|\hat\sigma_0\|^{2}+\ldots + k_{n-1}\frac{\partial^2 }{\partial z \partial \bar z}\log\|\hat \sigma_{j}\|^{2}.\end{align*}
By \eqref{metricgrasshol}, we conclude	that	
$$\eta^* ds^2_{\CP^N}= \sum_{j=0}^{n-1} k_j \gamma_{j}=\Psi^*ds^2_{F}.$$ 
\end{proof}
\begin{theorem}
Let  $M$ be a simply connected Riemann surface and $\psi_0:M\to \CP^n$ be a full holomorphic map, with $n>1$. 
If the lift $\Psi:M \rightarrow F_{1,\ldots,1}$, with $\Psi=(
\psi_0, \ldots, \psi_n)$, is  a primitive immersion of constant curvature with respect to at least one invariant metric on $F_{1,\ldots,1}$ satisfying \eqref{kinteger}, then  $\psi_0$ is locally congruent with the Veronese map.
\end{theorem}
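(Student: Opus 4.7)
The plan is to generalize the argument of Theorem \ref{principaltheorem} by replacing the polynomial structure of the $\beta_j$'s on $S^2$ with the rigidity of holomorphic constant-curvature immersions into complex projective space supplied by the Chi-Zheng theorem \cite[Theorem 2]{Chi-Zheng}. The auxiliary map $\eta$ introduced in Lemma \ref{same metric} provides exactly the bridge one needs.

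First, by Lemma \ref{same metric} and the hypothesis that $\Psi$ has constant curvature with respect to $g$, the holomorphic map $\eta:M\to\CP^{N}$ is a conformal immersion whose induced Fubini-Study metric has constant positive curvature. Since $M$ is simply connected and $\eta$ is holomorphic, the Chi-Zheng theorem (a version of Calabi's rigidity valid on simply connected surfaces) implies that, up to a unitary transformation of $\C^{N+1}$, $\eta$ takes the form $V^D\circ\varphi$, where $V^D:\CP^1\to\CP^D\hookrightarrow\CP^N$ is the Veronese map (with $D$ determined by the curvature of $\eta$) and $\varphi:M\to\CP^1$ is a nonconstant holomorphic map (automatically a local biholomorphism since $\eta$ is immersive). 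Setting $\beta_j=\|\hat\sigma_j\|^2$ for a nowhere vanishing local section $\hat\sigma_j$ of $\sigma_j$, the expression of $\eta$ as a tensor product of the $\sigma_j$'s together with the standard homogeneous lift of the Veronese yields an identity
$$\beta_0^{k_0}\cdots\beta_{n-1}^{k_{n-1}}=(1+|\varphi(z)|^2)^{D}\,|h(z)|^2$$
for some nowhere vanishing holomorphic function $h$ absorbing the ambiguity in the choice of local sections.

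Next, I would complexify this identity on a neighborhood of the anti-diagonal $\{w=\bar z\}$ in $\C\times\C$, obtaining
$$B_0^{k_0}(z,w)\cdots B_{n-1}^{k_{n-1}}(z,w)=\bigl(1+\varphi(z)\,\ov\varphi(w)\bigr)^{D}\,h(z)\,\ov h(w),$$
where $B_j,\ov\varphi,\ov h$ denote the complexifications of $\beta_j,\bar\varphi,\bar h$. Since $1+\varphi(z)\,\ov\varphi(w)$ is an irreducible germ of holomorphic function on a neighborhood of the anti-diagonal (as $\varphi$ is nonconstant) while $h(z)\,\ov h(w)$ is a unit there, unique factorization in the local ring of germs of holomorphic functions in two variables forces each $B_j$ to be a unit times a power of $1+\varphi(z)\,\ov\varphi(w)$. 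Imposing that $\beta_j(z,\bar z)=B_j(z,\bar z)$ is real valued and strictly positive on $M$, and that the $k_j$-weighted product of the remaining unit factors coincides with $h(z)\,\ov h(w)$, one deduces
$$\beta_j(z,\bar z)=C_j\,(1+|\varphi(z)|^2)^{\alpha_j},\qquad j=0,\ldots,n-1,$$
for positive constants $C_j$ and nonnegative integers $\alpha_j$ with $\sum_j k_j\alpha_j=D$; fullness of $\psi_0$ rules out $\alpha_j=0$.

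Equation \eqref{metricgrasshol} now yields $\gamma_j=\alpha_j|\varphi'(z)|^2/(1+|\varphi(z)|^2)^2$, so each osculating curve $\sigma_j$ induces on $M$ the pullback via $\varphi$ of a round metric on $\CP^1$. In the local coordinate $w=\varphi(z)$, $\psi_0:M\to\CP^n$ becomes a full holomorphic immersion of constant Gaussian curvature $4/\alpha_0$ from a domain of $\CP^1$, and Calabi's local rigidity theorem \cite{LawsonCalabi} forces $\psi_0$ to be unitarily congruent to the Veronese map $V^n$ in this coordinate. The main obstacle is the passage from the single scalar identity on $\prod_j\beta_j^{k_j}$ to the individual identities for each $\beta_j$: in Theorem \ref{principaltheorem} the $\beta_j$'s were global polynomials on $S^2=\CP^1$, so polynomial unique factorization in $(z,w)$ applied directly; here, lacking a polynomial structure, one must invoke analytic unique factorization in the local ring of germs of holomorphic functions in two variables together with the positivity of $\beta_j$ on the anti-diagonal, and check that the local factorization is consistent globally on the simply connected $M$.
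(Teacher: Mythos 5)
Your opening steps coincide with the paper's: Lemma \ref{same metric} transfers the constant-curvature hypothesis to the holomorphic map $\eta$, and Calabi/Chi--Zheng rigidity identifies $\eta$, locally and up to a constant unitary, with a Veronese map in a suitable coordinate. The gap is precisely in the step you yourself flag as the main obstacle. Passing from the single scalar identity $\beta_0^{k_0}\cdots\beta_{n-1}^{k_{n-1}}=(1+|\varphi|^2)^{D}|h|^2$ to individual identities for each $\beta_j$ by ``unique factorization in the local ring of germs of holomorphic functions in two variables'' is vacuous: at every point of the anti-diagonal $\{w=\bar z\}$ one has $1+\varphi(z)\ov\varphi(w)=1+|\varphi(z)|^2\geq 1$, so $1+\varphi(z)\ov\varphi(w)$ is a \emph{unit} in each of the local rings you invoke, as are $h(z)\ov h(w)$ and the complexified $B_j$ themselves; unique factorization there imposes no constraint at all (every germ in sight is trivially a unit times the zeroth power of $1+\varphi\ov\varphi$). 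This is exactly the feature that made the compact case (Theorem \ref{principaltheorem}) work and that is lost here: there the $\beta_j$ were global polynomials, their complexifications were polynomials on all of $\C\times\C$, and the zero set of the irreducible factor $1+zw$, which lies \emph{off} the anti-diagonal, was detected by factorization in $\C[z,w]$; on a general simply connected domain the complexifications exist only near the anti-diagonal, where the distinguished factor never vanishes. Moreover, the product identity alone cannot suffice in principle: it is unchanged under $\beta_0\mapsto u^{k_1}\beta_0$, $\beta_1\mapsto u^{-k_0}\beta_1$ for an arbitrary positive function $u$, so positivity and real-analyticity of the $\beta_j$ give no leverage; some information finer than the product of norms must enter.

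The missing ingredient --- and this is the actual content of the Chi--Zheng technique you cite but do not use --- is to compare the identity $\hat\eta=A\hat V$ \emph{componentwise}, not only at the level of norms. Since $A$ is a constant unitary matrix and the Veronese section has polynomial entries, every product $(\hat\sigma_0)_{i_0^1}\cdots(\hat\sigma_{n-1})_{i_{n-1}^{k_{n-1}}}$ of components of the osculating sections is a polynomial $P^I(z)$, as in \eqref{defPI}. Normalizing so that each $(\hat\sigma_j)_0$ is nonvanishing, the quantity $\sum_{l}|P^{I(l,j)}|^2$ is then an honest polynomial in $(z,\bar z)$, and the norm identity becomes the polynomial equation \eqref{factP}, to which unique factorization in $\C[z,\bar z]$ genuinely applies (with $1+z\bar z$ irreducible and not dividing $|P^{\vec 0}|^2$); this yields $\|\hat\sigma_j\|^2=|H_j(z)|^2(1+z\bar z)^{N_j}$, hence $\gamma_0$ is of constant-curvature type and Calabi's rigidity finishes the proof. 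Without this componentwise step your argument does not go through; note also that your asserted form $\beta_j=C_j(1+|\varphi|^2)^{\alpha_j}$ with constants $C_j$ is stronger than what can hold --- a nonvanishing holomorphic square factor $|H_j(z)|^2$ must be allowed, since $\hat\sigma_j$ is only determined up to such a factor --- though this is minor compared with the factorization gap.
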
  
\begin{proof}
Assume $\Psi$ has constant curvature $K$, with induced metric of the form  \eqref{kinteger}. Thus, by Lemma \ref{same metric}, the holomorphic map $\eta$ has constant curvature $K$. Hence,  $\eta$ is locally unitarily equivalent to a portion of the Veronese map $V_{N}$. 
Take local nonvanishing holomorphic sections $\hat \sigma_j$ of $\sigma_j$, with $j\in\{0,\ldots , n-1\}$, such that $\eta$ is locally spanned by 
$$\hat\eta= \hat \sigma_0^{k_0}\otimes \hat \sigma_1^{k_1}\otimes \ldots \otimes \hat \sigma^{k_{n-1}}_{n-1} ,$$
and 
\begin{equation}\label{etaveronese}
\hat \eta=A\hat V_N 
\end{equation}
for some constant $A\in \U(N)$, where $\hat V_N$ is the standard section of the Veronese map.  
For each positive integer $m$, set $[m]= \{0,1,\ldots, m-1\}$. Denote by $(\hat\sigma_j)_i$ the $i$-th component of $\hat \sigma_j$, with $i\in [L_j]$.
Comparing components of both sides of \eqref{etaveronese} 
we get,  for each multiindex 
\begin{equation}\label{multiindex}
I=(i_0^1,i_0^2,\ldots, i_0^{k_0}, i_1^1,i_1^2,\ldots, i_1^{k_1},\ldots)\in [L_0]^{k_0}\times [L_1]^{k_1}\times \ldots\times [L_{n-1}]^{k_{n-1}},
\end{equation}
the following 
\begin{equation}\label{defPI}
(\hat \eta)_I=(\hat \sigma_0)_{i^1_0} (\hat \sigma_0)_{i^2_0}\dots  (\hat \sigma_0)_{i^{k_0}_0}  (\hat \sigma_1)_{i_1^1} (\hat \sigma_1)_{i_1^2} \ldots 
(\hat \sigma_{n-1})_{i^{k_{n-1}}_{n-1}}=P^I(z)
\end{equation}
where $P^I$ is polynomial in $z$. We assume, without loss of generality, that 
$(\hat \sigma_j)_0$ is nonvanishing for all $j$. Then
\begin{equation}\label{bigsigmafraction}
\frac{(\hat \sigma_j)_l}{(\hat \sigma_j)_0}=\frac{(\hat \sigma_0)^{k_0}_0 \ldots(\hat \sigma_{j-1})^{k_{j-1}}_0(\hat \sigma_{j})_l(\hat \sigma_{j})^{k_j-1}_0(\hat \sigma_{j+1})^{k_{j+1}}_0 \ldots (\hat\sigma_{n-1})^{k_{n-1}}_0 }{(\hat \sigma_0)^{k_0}_0 \ldots(\hat \sigma_{j-1})^{k_{j-1}}_0(\hat \sigma_j)^{k_j}_0(\hat \sigma_{j+1})^{k_{j+1}}_0 \ldots (\hat\sigma_{n-1})^{k_{n-1}}_0}=\frac{P^{I( l,j)}}{P^{I(0,j)}},
\end{equation}
where $I(l,j)$ is the multindex of the form \eqref{multiindex} with $i^1_j=l\in [L_j]$ and all the other slots equal to zero. Observe that $I(0,j)=\vec 0$.
The equality  \eqref{bigsigmafraction} implies that
\begin{equation*}
\hat\sigma_j
= \frac{(\hat\sigma_j)_0}{P^{\vec 0}}\left(P^{I(0,j)} , P^{I(1,j)}, P^{I(2,j)},\ldots ,P^{I(L_j-1,j)}   \right),
\end{equation*} 
hence 
\begin{equation}\label{tildesigmacompon}
\|\hat\sigma_j\|^2
= \frac{|(\hat\sigma_j)_0|^2}{|P^{\vec 0}|^2}\sum_{l\in[L_j]}|P^{I(l,j)}|^2.
\end{equation} 
Observe also from \eqref{defPI}
\begin{equation}\label{PO2}
|P^{\vec 0}|^2=\prod_{j\in [n]}
|(\hat \sigma_j)_0|^{2k_j} .
\end{equation}

From  \eqref{etaveronese} we have 
\begin{equation}\label{tildeetanorm1}
\|\hat \eta\|^2=\|A\hat V_N\|^2=(1+z\bar z)^N. 
\end{equation}
On the other hand, from \eqref{tildesigmacompon} and \eqref{PO2},
\begin{equation}
\label{tildeetanorm2}
\begin{aligned}
\|\hat \eta\|^2&=\|\hat\sigma_0\|^{2k_0}\ldots\|\hat\sigma_{n-1}\|^{2k_{n-1}}\\&=
\frac{1}{|P^{\vec 0}|^{2(k_0+\ldots+k_{n-1}-1)}}\prod_{j\in[n]}\Big(\sum_{l\in [L_{j}]}|P^{I(l,j)}|^2\Big)^{k_j}.
\end{aligned}
\end{equation}
Equating \eqref{tildeetanorm1} and \eqref{tildeetanorm2},  we obtain
\begin{equation}\label{factP}
(1+z\bar z)^N |P^{\vec 0}|^{2(k_0+\ldots+k_{n-1}-1)}=\prod_{j\in[n]}\Big(\sum_{l\in [L_{j}]}|P^{I(l,j)}|^2\Big)^{k_j}.
\end{equation} 
Both sides of \eqref{factP} are polynomial functions in $z$ and $\bar z$. Since $\C[z,\bar{z}]$ is a unique factorization domain and $(1+z\bar{z})$ is irreducible in $\C[z,\bar{z}]$, it follows that, for each $j\in[n]$, 
\begin{equation}\label{factP1}\sum_{l\in [L_{j}]}|P^{I(l,j)}|^2=G_j(z,\bar z)(1+z\bar z)^{N_j},
\end{equation}
for some nonnegative integer $N_j$, where $G_j$ is a polynomial function in $z$ and $\bar z$ not divisible by $1+z\bar z$.

Each $G_j$ is real since the remaining terms in equation \eqref{factP1} are real. On the other hand, observe that \eqref{factP} and \eqref{factP1} give
$$(1+z\bar z)^N |P^{\vec 0}|^{2(k_0+\ldots+k_{n-1}-1)}=(1+z\bar z)^{k_0N_0+\ldots+ k_{n-1}N_{n-1}}G_0^{k_0}(z,\bar z)\ldots G_{n-1}^{k_{n-1}}(z,\bar z).$$
Since $P^{\vec 0}$ is holomorphic (polynomial in $z$),  the factor $1+z\bar z$ does not divide $  |P^{\vec 0}|^2$, consequently
\begin{equation}\label{prodGN}
|P^{\vec 0}|^{2(k_0+\ldots+k_{n-1}-1)}=G_0^{k_0}(z,\bar z)\ldots G_{n-1}^{k_{n-1}}(z,\bar z).
\end{equation}
It follows from \eqref{prodGN}, together with  the reality of $G_j$, that $G_j(z,\bar z)= |h_j(z)|^2$ for some polynomial $h_j(z)$ in $z$.
Hence  
\begin{equation}\label{factP2}\sum_{l\in [L_{j}]}|P^{I(l,j)}|^2=|h_j(z)|^2(1+z\bar z)^{N_j}.
\end{equation}
From \eqref{tildesigmacompon} and \eqref{factP2} we have
\begin{equation*}\label{eq9}
\|\hat \sigma_{j}\|^2=|H_j(z)|^2(1+z\bar z)^{N_j}
\end{equation*}
where $H_j$ is the holomorphic function defined by $$H_j=\frac{h_j(\sigma_{j})_0}{P^{\vec 0}}.$$ 
Since, for any holomorphic function $H$,  
$$	\frac{\partial^2 }{\partial z \partial \bar z} \log|H(z)|^2=0,$$
we have
\begin{equation*}\label{metrics1}  
\gamma_j=	\frac{\partial^2 }{\partial z \partial \bar z} \log \|\hat \sigma_{j} \|^2 = \frac{N_j}{(1+|z|^2)^2}.
\end{equation*}
Taking $j=0$, this shows that $\psi_0$ has constant curvature, then $\psi_0$  is locally congruent with a Veronese map.
\end{proof}

\subsection*{Data Availability Statement} Data sharing is not applicable to this article as no new data were created or analyzed in this study.

\subsection*{Declarations}
\subsection*{Conflict of interest}
 The authors declare no conflict of interest.

	\end{document}